\newtheorem{theorem}{Theorem}[section]
\theoremstyle{definition}
\theoremstyle{remark}
\numberwithin{equation}{section}
\newcommand{\Gf}{\mathrm{der}_{\theta, b}(G, \cdot)}
\begin{document}
\title{On free polyadic groups}
\author{Gh. Fathtabar}
\address{Gh. Fathtabar} \email{fathtabar@kashanu.ac.ir}
\author{\sc H. Khodabandeh}
\address{H. Khodabandeh} \email{hamid$_{-}$2794@yahoo.com}
\author{\sc K.Yousefi}
\address{K.Yousefi} \email{kosar$_{-}$u1368@yahoo.com}
\thanks{{\scriptsize
\hskip -0.4 true cm MSC(2010): Primary 20N15, Secondary 08A99 and 14A99
\newline Keywords: Polyadic groups; $n$-ary groups; Post's cover; Free polyadic groups; Free group}}

\address{K. Yousefi} %\email{kosar\_ u1368@ yahoo.com}
\date{\today}

%%% ----------------------------------------------------------------------
\begin{abstract}
In this article, for a polyadic group $(G, f)=\Gf$, we give a necessary and sufficient condition in terms of the group $(G, \cdot)$, the automorphism $\theta$, and the element $b$, in order that the polyadic group becomes free.
\end{abstract}

\maketitle

%%%%%%%%%%%%%%%%%%%%%%%%%%%%%%%%%%%%%%%%%%%%%%%%%%%%%%%%%%%%%%%%%%%%%%

\section{Introduction}

In this article, we continue our study of the structure of free {\em polyadic groups} and we will answer a question of M. Shahryari. A polyadic group is a  natural generalization of the concept of group to the case where the binary operation of group replaced with an $n$-ary associative operation, one variable linear equations in which have unique solutions (see the next section for the detailed definitions). These interesting algebraic objects are introduced by Kasner and D\"ornte (\cite{Kas} and \cite{Dor}) and  studied extensively by Emil Post during the first decades of the last century, \cite{Post}. During decades, many articles are published on the structure of polyadic groups. Already  homomorphisms and automorphisms of polyadic groups are studied in  \cite{Khod-Shah}. A characterization of the simple polyadic groups is obtained by them in \cite{Khod-Shah2}. Also  representation theory of polyadic groups is studied in  \cite{Dud-Shah}. The complex characters of finite polyadic groups are also investigated by the M. Shahryari in \cite{Shah2}.

It is known that for every polyadic ($n$-ary) group $(G, f)$, there exists a corresponding ordinary group $(G, \cdot)$, an automorphism $\theta$ of this ordinary group, and an element $b\in G$, the structure of $(G, f)$ in which complectly determined by  $(G, \cdot)$, $\theta$, and $b$. M. Shahryari, asked us to find a necessary and sufficient condition for a polyadic group $(G, f)$ to be free in terms of  the group $(G, \cdot)$, the automorphism $\theta$, and the element $b$. Our aim is to give an answer to this question.

\section{Polyadic groups}
This section contains basic notions and properties of polyadic groups as well as some literature.
Let $G$ be a non-empty set and $n$ be a positive integer. If
$f:G^n\to G$ is an $n$-ary operation, then we use the compact
notation $f(x_1^n)$ for the elements $f(x_1, \ldots, x_n)$. In
general, if $x_i, x_{i+1}, \ldots, x_j$ is an arbitrary sequence of
elements in $G$, then we denote it as $x_i^j$. In the special case,
when all terms of this sequence are equal to a constant $x$, we denote
it by $\stackrel{(t)}{x}$, where $t$ is the number of terms.  We say that an $n$-ary
operation is {\em associative}, if for any $1\leq i<j\leq n$, the
equality
$$
f(x_1^{i-1},f(x_i^{n+i-1}),x_{n+i}^{2n-1})=
f(x_1^{j-1},f(x_j^{n+j-1}),x_{n+j}^{2n-1})
$$
holds for all $x_1,\ldots,x_{2n-1}\in G$. An $n$-ary system $(G,f)$
is called an {\em $n$-ary group} or a {\em polyadic group}, if $f$
is associative and for all $a_1,\ldots,a_n, b\in G$ and $1\leq i\leq
n$, there exists a unique element $x\in G$ such that
$$
f(a_1^{i-1},x,a_{i+1}^n)=b.
$$
It is proved that the uniqueness assumption on the solution $x$ can
be dropped \cite{Dud2}. Clearly, the case $n=2$ is
just the definition of ordinary groups. During
this article, we assume that $n> 2$.
The classical paper of E. Post \cite{Post},  is
one of the first articles published on the subject. In this paper,
Post proves his well-known {\em coset theorem}. Many basic
properties of polyadic groups are studied in this paper. The
articles \cite{Dor} and \cite{Kas} are among the first materials
written on the polyadic groups. Russian reader, can use
the book of Galmak \cite{Gal}, for an almost complete description of
polyadic groups.
The articles \cite{Art}, \cite{Dud1}, \cite{Gal2}, and \cite{Gle}
can be used for study of axioms of polyadic groups as well as their
varieties.

Note that an $n$-ary system $(G,f)$ of the form $\,f(x_1^n)=x_1
x_2\ldots x_nb$, where $(G,\cdot)$ is a group and $b$  a fixed
element belonging to the center of $(G,\cdot)$, is an $n$-ary group.
Such an $n$-ary group is called {\em $b$-derived} from the group
$(G,\cdot)$ and it is denoted by $\mathrm{der}_b^n(G, \cdot)$. In the case when $b$ is the identity of $(G,\cdot)$, we
say that such a polyadic group is {\em reduced} to the group $(G, \cdot
)$ or {\em derived} from $(G, \cdot)$ and we use the notation $\mathrm{der}^n(G, \cdot)$ for it. For every $n>2$, there
are $n$-ary groups which are not derived from any group. An $n$-ary
group $(G,f)$ is derived from some group if and only if it contains
an element $a$ (called an {\em $n$-ary identity}) such that
$$
 f(\stackrel{(i-1)}{a},x,\stackrel{(n-i)}{a})=x
$$
holds for all $x\in G$ and for all $i=1,\ldots,n$, see \cite{Dud2}.

From the definition of an $n$-ary group $(G,f)$, we can directly see
that for every $x\in G$, there exists only one $y\in G$,  satisfying
the equation
$$
f(\stackrel{(n-1)}{x},y)=x.
$$
This element is called {\em skew} to $x$ and it is denoted by
$\overline{x}$.    As D\"ornte \cite{Dor} proved, the
following identities hold for all $\,x,y\in G$, $2\leq i\leq n$,
$$
f(\stackrel{(i-2)}{x},\overline{x},\stackrel{(n-i)}{x},y)=
f(y,\stackrel{(n-i)}{x},\overline{x},\stackrel{(i-2)}{x})=y.
$$
These identities together with the associativity identities, axiomatize the variety of polyadic groups in the algebraic language $(f, ^{-})$.

Suppose $(G, f)$ is a polyadic group and $a\in G$ is a fixed
element. Define a binary operation
$$
x\ast y=f(x,\stackrel{(n-2)}{a},y).
$$
Then $(G, \ast)$ is an ordinary group, called the {\em retract} of
$(G, f)$ over $a$. Such a retract will be denoted by $ret_a(G,f)$. All
retracts of a polyadic group are isomorphic \cite{DM}. The
identity of the group $(G,\ast)$ is $\overline{a}$. One can verify
that the inverse element to $x$ has the form
$$
y=f(\overline{a},\stackrel{(n-3)}{x},\overline{x},\overline{a}).
$$

One of the most fundamental theorems of polyadic group is the
following, now known as {\em Hossz\'{u} -Gloskin's Theorem}. We will use
it frequently in this article and the reader can use \cite{DG},
\cite{DM1}, \cite{Hos} and \cite{Sok} for detailed discussions.

\begin{theorem}
Let $(G,f)$ be an $n$-ary group. Then there exists an ordinary group $(G, \cdot)$,
an automorphism $\theta$ of $(G, \cdot)$ and an element $b\in G$ such that

1.  $\theta(b)=b$,

2.  $\theta^{n-1}(x)=b x b^{-1}$, for every $x\in
G$,

3.  $f(x_1^n)=x_1\theta(x_2)\theta^2(x_3)\cdots\theta^{n-1}(x_n)b$, for
all $x_1,\ldots,x_n\in G$.

\end{theorem}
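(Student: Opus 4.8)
The plan is to produce the triple $(G,\cdot)$, $\theta$, $b$ explicitly from the $n$-ary operation by fixing a base point, and then to verify the three listed properties. Fix an arbitrary $a\in G$ and let $(G,\cdot)$ be the retract $ret_a(G,f)$, so that $x\cdot y=f(x,\stackrel{(n-2)}{a},y)$; as recalled above this is a group whose identity is $e=\overline{a}$ and whose inverses are given by the stated formula. Define
\[
b=f(\stackrel{(n)}{e}),\qquad \theta(x)=f(e,x,\stackrel{(n-2)}{e})\cdot b^{-1}.
\]
These are exactly the values that the desired identity 3 forces upon specializing all arguments to $e$, and all but the second argument to $e$, respectively, so there is essentially no choice to be made. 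Note at once that $\theta(e)=f(\stackrel{(n)}{e})\cdot b^{-1}=b\cdot b^{-1}=e$, and that $\theta$ is a bijection because $x\mapsto f(e,x,\stackrel{(n-2)}{e})$ is a bijection by the solvability axiom and right translation by $b^{-1}$ is a bijection of the group.

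The first substantial step is to prove that $\theta$ is a homomorphism, hence an automorphism, of $(G,\cdot)$. I would do this by writing $\theta(x\cdot y)$, $\theta(x)$ and $\theta(y)$ out as expressions in $f$ and collapsing both sides using the generalized associativity law together with D\"ornte's identities. The mechanism is that the padding strings $\stackrel{(n-2)}{e}$ and $\stackrel{(n-2)}{a}$ can be slid through $f$ and partially cancelled against skew elements; once this ``sliding'' auxiliary identity is isolated, multiplicativity of $\theta$ becomes a formal consequence of associativity.

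The second step is the representation formula 3 itself. The key observation is that a $\cdot$-product $y_1\cdot y_2\cdots y_k$ is, by definition and associativity, the $f$-reduction of the long word $y_1\,\stackrel{(n-2)}{a}\,y_2\,\stackrel{(n-2)}{a}\cdots\stackrel{(n-2)}{a}\,y_k$, so that proving 3 amounts to showing that the long word built from $x_1,\theta(x_2),\dots,\theta^{n-1}(x_n),b$ with the interspersed padding strings reduces, via generalized associativity, to $f(x_1^n)$. I would carry this out by induction on the number of factors, using multiplicativity of $\theta$ from the previous step to expand the iterated powers $\theta^k(x_{k+1})$ and absorb the $n-2$ padding copies of $a$ sitting between consecutive factors; the powers $\theta,\theta^2,\dots,\theta^{n-1}$ appear precisely because the $k$-th factor must be transported past $k-1$ such padding blocks, each pass contributing one application of $\theta$.

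Finally, properties 1 and 2 follow from the representation together with associativity of $f$: substituting 3 into the associativity identity for $f$ and simplifying shows that the twisted product $x_1\theta(x_2)\cdots\theta^{n-1}(x_n)b$ is associative if and only if $\theta(b)=b$ and $\theta^{n-1}(x)=bxb^{-1}$, so these relations are forced. The main obstacle throughout is the associativity bookkeeping of the first two steps: the real content of the theorem is that the single $n$-ary associativity law is strong enough to force the twisting map to be multiplicative and to satisfy the coherence relations, and the care lies in tracking the skew elements and padding strings while collapsing nested applications of $f$.
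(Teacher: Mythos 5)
The paper itself offers no proof of this statement: it is the classical Hossz\'u--Gluskin theorem, quoted with pointers to \cite{Hos}, \cite{Sok}, \cite{DG}, \cite{DM1}. So your attempt can only be measured against the standard proofs in that literature, and your outline does follow exactly that classical route: fix $a$, take the retract with identity $e=\overline{a}$, read off $b=f(\stackrel{(n)}{e})$ and $\theta(x)=f(e,x,\stackrel{(n-2)}{e})\cdot b^{-1}$ as the values forced by identity 3, and verify. Several parts of your outline are sound: the observation that these definitions are forced (any automorphism fixes $e$, so 3 specialized at $e$ leaves no choice), the check that $\theta(e)=e$ and that $\theta$ is a bijection, and the closing step. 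Indeed, writing the associativity identity $f(f(x_1^n),x_{n+1}^{2n-1})=f(x_1,f(x_2^{n+1}),x_{n+2}^{2n-1})$ through identity 3 and cancelling the common prefix and suffix gives $b\,\theta(x)=\theta^{n}(x)\,\theta(b)$ for all $x$; taking $x=e$ yields $\theta(b)=b$, and then surjectivity of $\theta$ yields $\theta^{n-1}(y)=byb^{-1}$ for all $y$, so 1 and 2 really are consequences of 3 together with associativity.

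The genuine gap is that the two claims carrying the entire content of the theorem --- that $\theta$ is multiplicative for the retract product, and that identity 3 holds --- are only announced, never carried out: ``I would do this by writing out \dots and collapsing'' and ``the mechanism is that the padding strings can be slid through'' are statements of intent, not proofs, and this is precisely where all the work of Hossz\'u, Gluskin, and Sokolov lies. There is also a concrete technical obstruction to the sliding mechanism as you describe it: the absorption identity that drives the classical induction, in the normalization of \cite{DG} the identity $x\ast\theta(y)=f(x,y,\stackrel{(n-2)}{a})$ (proved from associativity and D\"ornte's identity $f(x,\stackrel{(n-2)}{a},\overline{a})=x$), is tied to the uncorrected map $y\mapsto f(\overline{a},y,\stackrel{(n-2)}{a})$, padded by $a$; your $\theta$ is padded by $e=\overline{a}$ and carries a correcting factor $b^{-1}$, so the cancellation against skew elements does not occur in the form you describe. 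To complete the argument you would either have to first prove that your forced $\theta$ coincides with the uncorrected, $a$-padded map (or with Sokolov's $\theta(x)=f(u,x,\overline{u},\stackrel{(n-3)}{u})$, which is the version this paper itself uses in its main theorem), or redo the induction while tracking the $b^{-1}$ factors. As written, the proposal is a correct plan in the same spirit as the proofs the paper cites, but its central lemmas are asserted rather than established.
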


According to this theorem, we  use the notation $\Gf$ for $(G,f)$
and we say that $(G,f)$ is $(\theta, b)$-derived from the group $(G,
\cdot)$. During this paper, we will assume that $(G, f)=\Gf$.

Varieties of polyadic groups and the structure of congruences on
polyadic groups are studied in \cite{Art} and \cite{Dud1}. It is proved that all congruences on polyadic groups
are commute and so the lattice of congruences is modular.

There is one more important object associated to polyadic groups. Let $(G, f)$ be a polyadic group. Then, as Post proved, there exists a unique group $(G^{\ast}, \circ)$ (which we call now the Post's cover of $(G, f)$) such that \\

{\em 1- $G$ is contained in $G^{\ast}$ as a coset of a normal subgroup $R$.\\

2- $R$ is isomorphic to a retract of $(G, f)$.\\

3- We have $G^{\ast}/R\cong \mathbb{Z}_{n-1}$.\\

4- Inside $G^{\ast}$, for all $x_1, \ldots, x_n\in G$, we have
$f(x_1^n)=x_1\circ x_2\circ \cdots \circ x_n$. \\

5- $G^{\ast}$ is generated by $G$.}\\

The group $G^{\ast}$ is also universal in the class of all groups having properties 1, 4. More precisely, if $\beta: (G, f)\to \mathrm{der}^n(H, \ast)$ is a polyadic homomorphism, then there exists a unique ordinary homomorphism $h: G^{\ast}\to H$ such that $h_{|_{G}}=\beta$. This universal property characterizes $G^{\ast}$ uniquely. The explicit construction of the Post's cover can be find in \cite{Shah2}.

\section{Free polyadic groups}
The structure of free polyadic groups, their construction, and their Post's cover are studied in \cite{Khod-Shah3}.  Let $(G,f)=\Gf$ be a polyadic group. It is natural to ask when this polyadic group is free. Our main theorem gives a complete solution for this problem. We will need the following well-known theorem of Nielsen-Schrier on the bases of free groups. Note that if $F(X)$ is the free group on a set $X$, $H$ is any subgroup, and $\mathcal{U}$ is a right transversal for $H$, then by the notation $\hat{u}$ we denote the unique element of $\mathcal{U}$ satisfying $Hu=H\hat{u}$. The proof of the following theorem can be find in any standard book of presentation theory of groups.

\begin{theorem}
Let $F(X)$ be the free group of rank $r$, $H$ be a subgroup of index $m$ and $\mathcal{U}$ be a right transversal of $H$. Then the rank of $H$ is equal to $m(r-1)+1$ and the set
$$
\{ u\widehat{(xux^{-1})}:\ u\in \mathcal{U}, x\in X, u\widehat{(xux^{-1})}\neq 1\}
$$
is a basis of $H$.
\end{theorem}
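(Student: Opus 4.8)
The plan is to prove this as the Reidemeister--Schreier theorem, by analysing $H$ through the action of $F=F(X)$ on the set of right cosets $H\backslash F$. First I would fix the transversal $\mathcal{U}$ and recall that for every reduced word $w$ the representative $\hat{w}\in\mathcal{U}$ satisfies $w\hat{w}^{-1}\in H$; for the basis part I would arrange $\mathcal{U}$ to be a \emph{Schreier} transversal, i.e.\ one closed under taking initial segments of reduced words (with $1\in\mathcal{U}$). The engine is the Schreier rewriting identity: for $w=x_1x_2\cdots x_k\in H$ with each $x_i\in X\cup X^{-1}$ and $u_i=\widehat{x_1\cdots x_{i-1}}$ one has $\widehat{u_i x_i}=u_{i+1}$, whence the telescoping factorisation $w=\prod_{i=1}^{k}\bigl(u_i x_i\,\widehat{u_i x_i}^{-1}\bigr)$ holds, every factor lying in $H$. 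Since a factor with $x_i\in X^{-1}$ is the inverse of one indexed by $X$, this already shows that the Schreier elements $u x\,\widehat{ux}^{-1}$ with $u\in\mathcal{U}$, $x\in X$ generate $H$, the trivial ones (those with $ux\in\mathcal{U}$) being discarded.

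The freeness of these generators is the real heart of the matter, and I expect it to be the main obstacle; generation and the rank count are essentially bookkeeping once the rewriting is in place. Here I would exploit the Schreier condition on $\mathcal{U}$: the factor $u x\,\widehat{ux}^{-1}$ is trivial exactly when $ux\in\mathcal{U}$, and otherwise its reduced form carries a distinguished \emph{middle} occurrence of $x^{\pm1}$ that survives in any concatenation of generators. Tracking these uncancellable middle letters --- the classical Nielsen argument, or equivalently the van der Waerden trick of letting the generators act on formal strings --- shows that a non-empty reduced product of the generators and their inverses is never the identity of $F$. Hence the generators satisfy no relation and form a free basis of $H$.

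For the rank I would simply count the surviving generators. There are $|\mathcal{U}|\,|X|=mr$ pairs $(u,x)$, and a pair contributes a trivial factor precisely when $ux\in\mathcal{U}$; for a Schreier transversal these correspond to the edges of a spanning tree of the Schreier (coset) graph, of which there are exactly $m-1$. Therefore the number of non-trivial generators is $mr-(m-1)=m(r-1)+1$, which is the asserted rank.

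A clean alternative I would keep in reserve, should the cancellation analysis become unwieldy, is the topological proof: realise $F(X)$ as $\pi_1$ of the rose $R_r$ with one vertex and $r$ loops, pass to the connected $m$-sheeted covering graph $Y$ corresponding to $H$ (which has $m$ vertices and $mr$ edges), and use that the fundamental group of a graph is free of rank $1-\chi(Y)=1-(m-mr)=m(r-1)+1$. Choosing a spanning tree of $Y$ recovers a basis and, under the dictionary between sheets and coset representatives, matches the Schreier transversal description, thereby circumventing the explicit cancellation argument altogether.
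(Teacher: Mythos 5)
You should first note that the paper contains no proof of this statement: it is quoted as the classical Nielsen--Schreier theorem, and its proof is explicitly deferred to ``any standard book of presentation theory of groups.'' So there is no internal argument to compare against; what you have written is precisely the standard textbook proof the paper points to, namely the Reidemeister--Schreier method (Schreier transversal, the telescoping factorisation $w=\prod_i u_ix_i\,\widehat{u_ix_i}^{\,-1}$, Nielsen's cancellation analysis of the surviving middle letters, and the spanning-tree count of the $m-1$ trivial generators), with the covering-space/Euler-characteristic argument held in reserve. Both routes are correct; the cancellation lemma you defer is genuinely the hard step, but you identify it and the two standard ways of carrying it out. One substantive point in your favour: the theorem as printed is actually false for an arbitrary right transversal, and your quiet repair --- requiring $\mathcal{U}$ to be a Schreier transversal, closed under initial segments --- cannot be dropped. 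For example, in $F(x)$ take $H=\langle x^2\rangle$ (index $2$) and the non-Schreier transversal $\mathcal{U}=\{1,x^3\}$: the displayed set becomes $\{x^{-2},x^4\}$, which generates $H$ but is not a basis of the infinite cyclic group $H$. (Likewise, the expression $u\widehat{(xux^{-1})}$ in the statement is a typo for the Schreier generator $ux\,\widehat{(ux)}^{\,-1}$, which is how you correctly read it.) Fortunately the only transversal the paper ever uses, $\mathcal{U}=\{1,u,u^2,\ldots,u^{n-2}\}$ in the proof of the main theorem, is Schreier, so your corrected statement is exactly what that application needs.
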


We can now prove our main theorem.

\begin{theorem}
Let $(G,f)=\Gf$ be a polyadic group. The necessary and sufficient condition for $(G,f)$ to be free of rank $s>1$ is that \\

1- the group $(G, \cdot)$ is a free  of rank $k$ and
$$
s=\frac{k-1}{n-1}+1,
$$

2- there exists a subset $\{ v_1, \ldots, v_{s-1}\}\subseteq G$, such that the set
$$
\{ b, \theta^j(v_i):\ 1\leq i\leq s-1, 0\leq j\leq n-2\}
$$
is a basis of $(G, \cdot)$.
\end{theorem}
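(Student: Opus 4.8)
The plan is to pass to Post's cover $G^{\ast}$ and translate the whole question into Nielsen--Schreier theory. First I would fix an explicit model of $G^{\ast}$: form the semidirect product $G\rtimes_{\theta}\mathbb{Z}$ in which a stable letter $z$ acts on $G$ by $\theta$, observe that $z^{n-1}b^{-1}$ is central (using $\theta(b)=b$ and $\theta^{n-1}=\mathrm{conj}_b$), and set $G^{\ast}=(G\rtimes_{\theta}\mathbb{Z})/\langle z^{n-1}b^{-1}\rangle$. In this group $(G,\cdot)$ is the degree-$0$ subgroup $R=\ker(\deg\colon G^{\ast}\to\mathbb{Z}_{n-1})$, one has $\theta(r)=zrz^{-1}$ and $b=z^{n-1}$, and the assignment $x\mapsto xz$ identifies $(G,f)=\Gf$ with the degree-$1$ coset, since $(x_1z)\circ\cdots\circ(x_nz)=f(x_1^{n})z$. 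I would also use the fact, established for free polyadic groups in \cite{Khod-Shah3}, that the Post cover of the free polyadic group of rank $s$ is the free group of rank $s$ realised as the degree-$1$ coset; equivalently, the degree-$1$ coset of a free group carrying a surjection onto $\mathbb{Z}_{n-1}$ is free polyadic of the corresponding rank.

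For sufficiency, assume 1 and 2 and let $F$ be the free group on symbols $Z,V_1,\dots,V_{s-1}$, graded by $\deg Z=1$ and $\deg V_i=0$. The homomorphism $\psi\colon F\to G^{\ast}$ determined by $Z\mapsto z$ and $V_i\mapsto v_i$ is degree preserving, hence restricts to the kernels of the two gradings and induces the identity on $\mathbb{Z}_{n-1}$. Relative to the Schreier transversal $\{1,Z,\dots,Z^{n-2}\}$ the kernel of $F\to\mathbb{Z}_{n-1}$ is free on the Schreier generators $\{Z^{n-1}\}\cup\{Z^{j}V_iZ^{-j}\}$, and $\psi$ carries these bijectively onto $\{b\}\cup\{\theta^{j}(v_i)\}$, which is a basis of $R$ by hypothesis 2. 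Thus $\psi$ is an isomorphism on kernels and on quotients, so by the five lemma $G^{\ast}\cong F$ is free of rank $s$; transporting the grading back shows $(G,f)$ is the degree-$1$ coset of a free group, i.e. free polyadic of rank $s$.

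For necessity, suppose $(G,f)$ is free of rank $s$, so that $G^{\ast}$ is free of rank $s$. Since $R=(G,\cdot)$ has index $n-1$ in $G^{\ast}$, the Nielsen--Schreier theorem above gives that $R$ is free of rank $(n-1)(s-1)+1$, which is exactly condition 1 with $s=\frac{k-1}{n-1}+1$. For condition 2, the cyclic quotient $G^{\ast}/R\cong\mathbb{Z}_{n-1}$ makes $\{1,z,\dots,z^{n-2}\}$ a transversal, and once $z$ is completed to a free basis $\{z,v_1,\dots,v_{s-1}\}$ of $G^{\ast}$ with the $v_i$ chosen of degree $0$, this transversal is Schreier and the Nielsen--Schreier basis of $R$ it produces is precisely $\{z^{n-1}\}\cup\{z^{j}v_iz^{-j}\}=\{b\}\cup\{\theta^{j}(v_i)\}$, giving condition 2.

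The main obstacle is the completion step just invoked: one must show that the distinguished degree-$1$ element $z$ attached to the data $(\theta,b)$ is part of a free basis of $G^{\ast}$, equivalently that $\langle z,v_1,\dots,v_{s-1}\rangle=G^{\ast}$ for suitable degree-$0$ elements $v_i$ (note that $\langle b,\theta^{j}(v_i)\rangle$ is always the degree-$0$ part of $\langle z,v_1,\dots,v_{s-1}\rangle$, so ``basis of $R$'' forces this subgroup to be all of $G^{\ast}$). This is delicate because an arbitrary degree-$1$ element of a free group need not be primitive, so the argument must exploit the freedom in the Hossz\'u--Gloskin presentation, choosing the representing element $z\in G$ so that it becomes a free generator of the Post cover. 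Verifying that such a choice is always available for a free $(G,f)$, and that the resulting $v_i$ genuinely lie in $G$, is where the real work of the necessity direction lies; the sufficiency direction, by contrast, is the clean five-lemma computation above.
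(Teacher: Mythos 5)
Your sufficiency half is correct, and it takes a genuinely different route from the paper's. The paper never maps a free group onto the Post cover: it uses hypothesis 2 to define a homomorphism $\alpha$ from $(G,\cdot)$ (free on $\{b,\theta^{j}(v_i)\}$) into the shifted free group $(F_u,\ast)$, $w_1\ast w_2=w_1u^{-1}w_2$, verifies by computation that $\alpha$ respects $f$, and then invokes Artamonov \cite{Artam} to conclude freeness. Your version --- the explicit model $(G\rtimes_{\theta}\mathbb{Z})/\langle z^{n-1}b^{-1}\rangle$ of $G^{\ast}$, Schreier generators of the degree-zero subgroup, and the short five lemma --- is cleaner and avoids Artamonov's theorem in that direction. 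Its one soft spot is the word ``equivalently'': what \cite{Khod-Shah3} provides is the grading in which \emph{every} free generator has degree $1$, while yours gives the $V_i$ degree $0$. This is repaired by the Nielsen automorphism $Z\mapsto Z$, $V_i\mapsto ZV_i$ of $F$, which carries the standard grading to yours and hence carries $\mathrm{ht}^{-1}(1)$ isomorphically, as a polyadic group, onto your degree-$1$ coset; without that one line the final step is unjustified.

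The obstacle you flag in the necessity half is a genuine gap, and it is worse than ``delicate'': for the \emph{given} data $(\cdot,\theta,b)$ the completion step is impossible in general, so necessity, read literally as the theorem states it, fails. As you yourself note, condition 2 forces $\langle z,v_1,\dots,v_{s-1}\rangle=G^{\ast}$, hence (free groups being Hopfian) forces $z$ --- the identity $e$ of $(G,\cdot)$ viewed inside the degree-$1$ coset --- to be primitive in $G^{\ast}$; but $e$ can be an arbitrary element of $G$. Concretely, let $n=3$, let $(G,f)$ be the free ternary group of rank $2$, realized as the words of odd exponent sum in $F(p,q)$ with $f(w_1,w_2,w_3)=w_1w_2w_3$, and take the Hossz\'u--Gluskin data based at $p^{3}$: $w_1\cdot w_2=w_1p^{-3}w_2$, $\theta(w)=p^{3}wp^{-3}$, $b=p^{9}$. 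This is a legitimate presentation $(G,f)=\mathrm{der}_{\theta,b}(G,\cdot)$ of a free polyadic group, and condition 1 holds, yet condition 2 fails: it would produce $w$ in the even subgroup $R$ with $\{p^{6},\,w,\,p^{3}wp^{-3}\}$ a basis of $R$, whence $\langle p^{3},w\rangle=F(p,q)$, so by Hopficity $\{p^{3},w\}$ would be a basis and $p^{3}$ primitive --- impossible, since its image $(3,0)$ in the abelianization is not a primitive vector of $\mathbb{Z}^{2}$.

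This is precisely the point at which the paper's own necessity proof is not watertight: after choosing a polyadic basis $X=\{u,v_1,\dots,v_{s-1}\}$ of $(G,f)$ it simply declares that ``the identity of $(G,\cdot)$ is $u$,'' i.e.\ it silently replaces the given data by the Sokolov data based at $u$ and proves conditions 1--2 for \emph{that} data. So the theorem is only correct when the Hossz\'u--Gluskin data is existentially quantified (sufficiency holds for every presentation, necessity only for a well-chosen one), and with that reading your argument closes exactly as the paper's does: take a polyadic basis $\{u,v_1,\dots,v_{s-1}\}\subseteq G$, so that $G^{\ast}=F(u,v_1,\dots,v_{s-1})$; the elements $w_i=v_iu^{-1}$ have degree $0$, the set $\{u,w_1,\dots,w_{s-1}\}$ is again a free basis, and your Schreier computation for the transversal $\{1,u,\dots,u^{n-2}\}$ returns $\{u^{n-1}\}\cup\{u^{j}w_iu^{-j}\}$ as a basis of $R$, which under the isomorphism $x\mapsto xu^{-1}$ of the retract at $u$ with $R$ is exactly $\{b\}\cup\{\theta^{j}(v_i)\}$ for the data based at $u$. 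In short, the ``real work'' you correctly located in the necessity direction is not missing work on your part but a defect of the statement itself, which both you and the paper can only circumvent by re-basing the data at a polyadic basis element.
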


\begin{proof}
First, assume that $(G,f)$ is free of rank $s>1$ and let
$$
X=\{ u, v_1, \ldots, v_{s-1}\}
$$
be a basis. By \cite{Khod-Shah3}, we have
$$
G=F^n_p(X)=\{ w\in F(X):\ \mathrm{ht}(w)\equiv 1\ (\mathrm{mod}\ n-1)\},
$$
where $\mathrm{ht}:F(X)\to \mathbb{Z}$ is the homomorphism $\mathrm{ht}(x_1^{\varepsilon_1}\ldots x_m^{\varepsilon_m})=\sum \varepsilon_i$.

Suppose that $H=\ker(\mathrm{ht})$. Then we have
$$
F(X)=H\cup Hu\cup \cdots \cup Hu^{n-2}.
$$
By \cite{Artam}, we know that $F(X)$ is the Post's cover of $(G,f)$ and since the index of $H$ in $F(X)$ is $n-1$, so $H$ is free of rank $k=(s-1)(n-1)+1$ and by the coset theorem of Post we have $(G, \cdot)\cong H$. So, $(G,f)$ is also a free group of rank $k=(s-1)(n-1)+1$ and this proves 1. Now, consider the right transversal $\mathcal{U}=\{ 1, u, u^2, \ldots, u^{n-2}\}$. By the above theorem, the set
$$
B=\{ u^i\widehat{(xu^ix^{-1})}\neq 1:\ 0\leq i\leq n-2, x\in X\}
$$
is a basis of $H$. Note that if $x=u$, then 
$$
u^ix\widehat{u^ix^{-1}}=u^{i+1}\widehat{u^{i+1}}^{-1}, 
$$
and hence for $0\leq i\leq n-3$, we have $u^ix\widehat{u^ix^{-1}}=1$ and for $i-n-2$, we have $u^ix\widehat{u^ix^{-1}}=u^{n-1}$. If $x=v_j$, then 
$$
u^iv_j\widehat{u^iv_j^{-1}}=u^iv_ju^{-(i+1)}, 
$$
and since $\mathrm{ht}(u^iv_j)=j+1$, so we have $u^iv_j\in Hu^{i+1}$. This means that $\widehat{u^iv_j}=u^{i+1}$. Consequently
$$
B=\{ u^{n-1}, u^iv_ju^{-(i+1)}: 0\leq i\leq n-2,\ 1\leq j\leq s-1\}. 
$$
Now, recall that $G=Hu$. By \cite{Khod-Shah2} and its notations, the identity of $(G, \cdot)$is $u$ and we have $\overline{w}=w^{2-1n}$, for all $w\in G$. Also, we have $$
f(w_1, \ldots, w_n)=w_1\cdot \cdots \cdot w_n.
$$
So, using the relations of Sokolov, we have
\begin{eqnarray*}
w_1\circ w_2&=& f(w_1, \overline{u}, \stackrel{(n-3)}{u}, w_2=w_1u^{-1}w_2,\\
w^{-u}&=& uw^{-1}u,\\
\theta(w)&=&f(u,w, \overline{u}, \stackrel{(n-3)}{u})=uwu^{-1},\\
b&=&f(\stackrel{(n)}{u})=u^n.
\end{eqnarray*}
Again, if we use the notations of \cite{Khod-Shah2}, we have
$$
G=Hu=H_u, 
$$ 
and the map $\eta:Hto G$, defined by $\eta(w)=wu$, is an isomorphism. Hence, the set 
$$
B^{\prime}=\eta(B)=\{ u^n, u^iv_ju^{-i}, u^{n-2}v_ju: 0\leq i\leq n-3, 1\leq j\leq s-1\}
$$
is a basis of $(G, \circ)$. Using Nielsen transformations, the next set is also a basis:
$$
B^{\prime\prime}=\{ u^n, u^iv_ju^{-i}, (u^{n-2}v_ju)\circ (u^n)^{-u}:    0\leq i\leq n-3, 1\leq j\leq s-1\}.
$$
But, since
\begin{eqnarray*}
(u^{n-2}v_ju)\circ (u^n)^{-u}&=& u^{n-2}v_juu^{-1}u^{-n}u\\
                             &=&u^{n-2}v_ju^{-(n-2)},
\end{eqnarray*}
so we have
\begin{eqnarray*}
B^{\prime\prime}&=&\{ u^n, u^iv_ju^{-i}:    0\leq i\leq n-2, 1\leq j\leq s-1\}\\
                &=&\{ b, \theta^i(v)j): 0\leq i\leq n-2, 1\leq j\leq s-1\}.
\end{eqnarray*}
This proves the assertion 2. \\

Now, assume that 1 and 2 are true and we prove that 
$$
(G, f)=\mathrm{der}_{\theta, b}(G, \circ)
$$
is free. By 2, suppose $F=F(X)$ and $X=\{u, v_1, \ldots, v_{s-1}\}$, where $u$ is the identity of $(G, \circ)$. Consider the group $F_u$. Inside this group, we have
$$
w_1\ast w_2=w_1u^{-1}w_2.
$$
We know that $F\cong F_u$. So, $F_u$ is a free group. Define a homomorphism $\alpha:(G, \circ)\to (F_u, \ast)$ by 
$$
\alpha(b)=u^n, \ \alpha(\theta^i(v_j))=u^iv_ju^{-i}.
$$
Put $v_{ij}=\theta^i(v_j)$. For any $m$, we have
$$
\alpha(\theta^m(v_{ij}))=u^m\alpha(v_{ij})u^{-m}, 
$$
and hence for any $w\in G$, we have
$$
\alpha(\theta^m(w))=u^m\alpha(w)u^{-m}.
$$
This shows that for all $w_1, \ldots, w_n\in G$, 
\begin{eqnarray*}
\alpha(f(w_1, \ldots, w_n))&=&\alpha(w_1\circ \theta(w_2)\circ \cdots\circ \theta^{n-2}(w_{n-1})\circ b\circ w_n)\\
                           &=&\alpha(w_1)u^{-1}\alpha(\theta(w_2))u\cdots u\alpha(\theta^{n-2}(w_{n-1}))u^{-1}\alpha(b)u^{-1}\alpha(w_n)\\
                           &=&\alpha(w_1)u^{-1}(u\alpha(w_2)u^{-1})u\cdots \alpha(w_{n-1})u^{-(n-2)}u^{n-2}\alpha(w_n)\\
                           &=&\alpha(w_1)\cdots \alpha(w_n).
\end{eqnarray*}
Note that $\alpha(u)=u$ and $\alpha(v_i)=v_i$, and hence $F(X)=\langle \alpha(G)\rangle$. On the other side, $F(X)$ has a normal subgroup $H$, with $H\cong (G, \circ)$, and $[F;H]=m$ such that $m|n-1$. Since $[F_u;G]=n-1$, so $m=n-1$ and this shows that $F$ is the Post's cover of $(G, f)$. Now, since $X\subseteq G\subseteq F$, by \cite{Artam}, $(G,f)$ is free.

\end{proof}


\begin{thebibliography}{30}

\bibitem{Artam}  Artamonov V., {\it Free $n$-groups}, Matematicheskie Zametki, 1970,
 {\bf 8}, pp. 499-507.

\bibitem{Art} Artamonov V.,
{\it On Schreier varieties of $n$-groups and $n$-semigroups},
Trudy Semin. im. I. G. Petrovskogo, 1979, {\bf 5}, pp. 193-203.





\bibitem{Dor} D\"ornte W., {\it Unterschungen \"uber einen verallgemeinerten Gruppenbegriff}, Math. Z., 1929,  {\bf  29}, pp. 1-19.


\bibitem{Dud1}  Dudek W., {\it Varieties of polyadic groups}, Filomat, 1995, {\bf 9}, pp. 657-674.

\bibitem{Dud2} Dudek W., {\it Remarks on $n$-groups}, Demonstratio Math., 1980, {\bf 13}, pp. 65-181.

\bibitem{DG}  Dudek W.,  Glazek K., {\it Around the Hossz\'u-Gluskin Theorem for $n$-ary groups},  Discrete Math., 2008, {\bf 308}, pp. 4861-4876.

\bibitem{DM1} Dudek W., Michalski J., {\it On a generalization of Hossz\'u theorem}, Demonstratio Math., 1982, {\bf  15}, pp. 437-441.

\bibitem{DM}  Dudek W. ,  Michalski J., {\it On retract of polyadic groups}, Demonstratio Math., 1984,  {\bf  17}, pp. 281-301.


\bibitem{Dud-Shah} Dudek W.,  Shahryari M., {\it Representation theory of polyadic groups}, Algebras and Representation Theory, 2012, {\bf  15}, pp. 29-51.




\bibitem {Gal} Galmak A., {\it $N$-ary groups}, Gomel University Press, 2003.

\bibitem {Gal2} Galmak A., {\it Remarks on polyadic groups},  Quasigroups and Related Systems, 2000,  {\bf  7}, pp. 67-70.

\bibitem {Gle} Gleichgewicht B.,  Glazek K.,  {\it Remarks on n-groups as abstract algebras},  Colloq. Math., 1967,  {\bf 17}, pp. 209-219.




\bibitem{Hos} Hossz\'{u} M., {\it On the explicit form of $n$-groups}, Publ. Math., 1963, {\bf 10},  pp. 88-92.

\bibitem {Kas} Kasner E., {\it An extension of the group concept}, Bull. Amer. Math. Soc., 1904, {\bf  10}, pp. 290-291.






\bibitem{Khod-Shah} Khodabandeh H., Shahryari M.,
\newblock {\it On the representations and automorphisms of polyadic groups},
\newblock Communications in Algebra, 2012, {\bf 40}, pp. 2199-2212.

\bibitem{Khod-Shah2} Khodabandeh H.,  Shahryari M.,
\newblock {\it Simple polyadic groups},
\newblock Siberian Math. Journal, 2014, {\bf 55}, pp. 734-744.

\bibitem{Khod-Shah3} Khodabandeh H.,  Shahryari M.,
\newblock {\it Equations over  polyadic groups},
\newblock Communication in Algebra, 2017, {\bf 45}, pp. 1227-1238.


\bibitem{Post} Post E., {\it Polyadic groups}, Trans. Amer. Math. Soc., 1940, {\bf  48}, pp. 208-350.




\bibitem{Shah2} Shahryari M., {\it Representations of finite polyadic groups}, Communications in Algebra,  2012, {\bf 40},  pp. 1625-1631.



\bibitem{Sok} Sokolov E., {\it On the Gluskin-Hossz\'{u} theorem for Dornte $n$-groups}, Mat. Issled., 1976,  {\bf 39}, pp. 187-189.



\end{thebibliography}
\end{document}